\documentclass[12pt,a4paper]{article}

\usepackage[utf8]{inputenc}
\usepackage[T2A]{fontenc}
\usepackage[english]{babel}
\usepackage{amscd,amssymb,amsthm}
\usepackage{amsfonts,amsmath}
\usepackage{hyperref}
\usepackage{graphicx}

\newtheorem{claim}{Claim}
\newtheorem{theorem}{Theorem}

\newtheorem{question}{Question}
\newtheorem{definition}{Definition}

\newtheorem{conjecture}{Conjecture}

\title{The chromatic number of Euclidean space with dense color classes}

\author{Maxim Didin, Vsevolod Voronov}

\begin{document}

\maketitle

\begin{abstract}
In this note we construct colorings of Euclidean space
$\mathbb{R}^n$ with finitely many colors such that any two points at unit distance have different colors and, in addition, each color class is dense
in $\mathbb{R}^n$. In particular, 12 dense colors suffice to color $\mathbb{R}^2$. In arbitrary dimension, we show that
$n\chi(\mathbb{R}^n)+1$ colors suffice, where
$\chi(\mathbb{R}^n)$ denotes the chromatic number of $\mathbb{R}^n$ in the
standard formulation.
\end{abstract}

\section{Introduction}

We consider a version of the Hadwiger--Nelson--Erdős problem concerning
the chromatic number of subsets of Euclidean space, with the following
additional requirement: every neighborhood of every point must contain
points of every color. The book~\cite{Soifer} is devoted to the history of
this problem and its generalizations and variants. An overview of recent
results can be found in~\cite{arman2024upper,cherkashin2018chromatic}.

Let $M \subseteq \mathbb{R}^n$ be a subset of $n$-dimensional Euclidean
space equipped with the induced metric; $M$ is allowed to coincide with the
whole space. Let
\[
c\colon M\to\{1,\dots,k\}
\]
be a function defining a coloring with $k$ colors, and let
\[
C_i=\{x\in M\mid c(x)=i\},\qquad i=1,\dots,k,
\]
be the color classes determined by the coloring $c$.

\begin{definition}
A coloring is called proper if no two points of the same color are at unit
distance from each other.
\end{definition}

Equivalently, one may consider a proper coloring of the distance graph
$G(M)$, whose vertices are the points of $M$, with two vertices joined by an
edge whenever the corresponding points are at unit distance. 

\begin{definition}
A coloring of $M$ with $k$ colors and color classes
$C_1,\dots,C_k$ is called everywhere dense if each of the sets
$C_1,\dots,C_k$ is dense in $M$ with respect to the standard topology.
\end{definition}

\begin{definition}
The least $k$ for which there exists a proper everywhere dense coloring of
$M$ with $k$ colors is called the chromatic number of $M$ with
dense color classes. We denote this number by
$\chi_{ed}(M)$.
\end{definition}

Clearly, imposing additional restrictions on colorings cannot decrease the
chromatic number. Therefore, for every
$M\subseteq\mathbb{R}^n$ (and, in fact, for every metric space), we have
\[
\chi(M)\leq\chi_{ed}(M).
\]

It is also important to note that, when color classes are required to be
dense, as with any other topological or geometric restriction on
color classes, we cannot conclude that the chromatic number is attained by
some finite graph. In other words, the de Bruijn--Erdős theorem cannot be
applied directly to this restricted coloring problem.

Consider colorings of the sphere
\[
S^2_{r_0}\subset\mathbb{R}^3,\qquad r_0=\frac{\sqrt{2}}{2},
\]
equipped with the induced Euclidean metric. Since, for
$x,y\in S^2_{r_0}$, the condition $\|x-y\|=1$ implies that the vectors
$x$ and $y$ are orthogonal, in this setting we may equivalently consider
colorings in which no two vectors of the same color have zero scalar
product. For convenience, we assume that the center of the sphere is the
origin.

It is known from~\cite{kochen1967problem,simmons1976chromatic} that
\[
\chi(S^2_{r_0})=4.
\]
The upper bound is given by the octahedral coloring, that is, a coloring in
which the sphere is divided into eight spherical triangles obtained by
choosing the signs of the coordinates $x_1,x_2,x_3$. The interior of each
triangle is assigned one color, while the colors of boundary points may be
chosen, among the colors of the adjacent triangles, in infinitely many
ways. The colorings of $S^2_{r_0}$ were studied in~\cite{Holmsen16}, where
several sufficient conditions were obtained for a $4$-coloring of
$S^2_{r_0}$ to be octahedral. The question of whether there exist other
$4$-colorings, in particular, everywhere dense ones, remains open.

Apparently, $S^2_{r_0}$ is the first example of a subset of Euclidean space
for which a nontrivial upper bound on the chromatic number with
dense color classes was obtained.

\begin{claim}[see~\cite{Holmsen16}]
    \[
    \chi_{ed}(S^2_{r_0})\leq 9,\qquad
    r_0=\frac{\sqrt{2}}{2}.
    \]
\end{claim}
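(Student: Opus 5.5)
The plan is to prove the claim by an explicit construction with $9=2\cdot 4+1$ colors. Colors $1,\dots,4$ come from an octahedral coloring $A$, colors $5,\dots,8$ from a second octahedral coloring $B$ (for instance a generic rotation of $A$), and color $9$ is reserved for special points. The observation that makes mixing possible is this: if every point receives either its $A$-color or its $B$-color, then two orthogonal points never conflict. Two $A$-colored points are fine because $A$ is proper, two $B$-colored points likewise, and an $A$-colored point and a $B$-colored point use disjoint palettes. Octahedral classes are not dense, so density has to be supplied separately. I will do this with a countable "generic" dense set $D=D_1\sqcup\dots\sqcup D_9$, where each $D_i$ is dense in $S^2_{r_0}$ and points of $D_i$ receive color $i$.

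The first step is to choose $D$ by a greedy diagonal construction. Enumerate a countable base $(U_m)$ of the sphere and a list of colors in which each $i\in\{1,\dots,9\}$ occurs infinitely often against every $U_m$. At each stage, pick a new point in the prescribed $U_m$ that avoids finitely many nowhere dense (algebraic) exceptional sets determined by the previously chosen points. The generic conditions to impose are:
\begin{enumerate}
\item no two points of $D$ are orthogonal;
\item no three points of $D$ lie on a common great circle;
\item no point of $D$ is orthogonal to a point of
\[
E=\{\pm\tfrac{a\times b}{\|a\times b\|}r_0 : a,b\in D,\ a\ne\pm b\};
\]
\item no two points of $E$ are orthogonal, except in the forced cases where they share a generator.
\end{enumerate}
Condition~(4) needs care, since $a\times b$ and $a\times c$ are both orthogonal to $a$ but are not forced to be orthogonal to each other; this has to be checked. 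At each finite stage only finitely many great circles and finitely many points are excluded, so such a choice exists, and every $D_i$ is dense.

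The second step is to color the rest. For $x\notin D$, the great circle $x^{\perp}$ contains at most two points of $D$ by condition~(2), and it contains exactly two precisely when $x\in E$. If $x\notin D\cup E$ and its unique orthogonal $D$-point (if any) has color in $\{1,\dots,4\}$, give $x$ its $B$-color; otherwise give it its $A$-color. Points of $E$ get color $9$. Properness is then checked class by class:
\begin{itemize}
\item $D$--$D$ pairs are handled by condition~(1).
\item $D$--$(A/B)$ pairs are handled by the rule above.
\item $A/B$ pairs among themselves are handled by the mixing observation.
\item Color-$9$ pairs are of three kinds: $D_9$--$D_9$ by condition~(1), $E$--$D_9$ by condition~(3), and $E$--$E$ by condition~(4).
\end{itemize}

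I expect the main obstacle to be the genericity bookkeeping for $E$, and specifically condition~(4). The $E$--$E$ orthogonality must really be avoidable: pairs $a\times b$, $c\times d$ with disjoint generators give a nontrivial algebraic condition on the newly chosen point. Pairs sharing a generator, such as $a\times b$ and $a\times c$, need a separate check that orthogonality is a proper closed condition, namely $(a\times b)\cdot(a\times c)=(a\cdot a)(b\cdot c)-(a\cdot c)(b\cdot a)\neq 0$, which fails only on a nowhere dense set of choices. If some pair turns out unavoidable, the fallback is to color such exceptional points of $E$ with an $A$- or $B$-color. This works as long as both of the point's two orthogonal $D$-points do not block both palettes, which can be arranged by the color scheduling in the diagonal construction.
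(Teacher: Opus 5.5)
\textbf{The coloring is not proper as written; the failure is in color class $9$.} You give color $9$ both to $D_9$ and to every point of $E$, but each point of $E$ is orthogonal to its own generators. Take $a\in D_9$ and any $b\in D$ with $b\neq\pm a$. The point $e=r_0\,(a\times b)/\|a\times b\|$ lies in $E$. Since $e\cdot a=0$, condition (1) gives $e\notin D$, so $e$ receives color $9$. But $\|e-a\|^2=1-2\,e\cdot a=1$, so $e$ and $a$ are a monochromatic unit-distance pair.

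Your check ``$E$--$D_9$'' relies on condition (3), and (3) cannot be imposed. It is violated by every pair $(a,e)$ with $a$ a generator of $e$, and for non-generators it only restates (2). The exception clause in (4) causes a smaller problem of the same kind. There are no forced orthogonalities among points of $E$, and allowing orthogonal $E$-points that share a generator would again produce monochromatic pairs of color $9$.

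\textbf{Repair.} The fix is essentially your fallback, applied to every point of $E$ rather than to hypothetical exceptions. By (2), for $e\in E$ the set $e^{\perp}\cap D$ consists exactly of its two generators $a,b$.
\begin{itemize}
\item Give $e$ color $9$ only when one of $c(a),c(b)$ lies in $\{1,\dots,4\}$ and the other lies in $\{5,\dots,8\}$.
\item Otherwise the two generator colors miss one of the palettes (a generator of color $9$ blocks neither palette). Then give $e$ its $B$-color or its $A$-color accordingly; this is compatible with your mixing observation.
\end{itemize}
With this rule, no color-$9$ point of $E$ is orthogonal to a point of $D_9$. Color $9$ is then proper as soon as (4) holds with no exceptions.

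Your own computation shows this is a nowhere dense condition on the new point $p$ at each step:
\begin{itemize}
\item If $p$ is not a shared generator, the bad set is a single great circle.
\item If $p$ is the common generator of two new points of $E$ with other generators $q,r$, the bad set is the curve $r_0^2\,(q\cdot r)=(p\cdot q)(p\cdot r)$. It is nontrivial because $q\cdot r\neq 0$ by (1).
\end{itemize}
In fact $D_9$ becomes superfluous, since the color-$9$ points of $E$ are already dense.

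Once repaired, your argument is a valid generic (greedy, Baire-type) realization of the same $2\cdot 4+1$ scheme as Theorem~\ref{thm:main}. The difference from the paper is where the dense orthogonality-free sets come from: you obtain them by genericity. The construction the paper cites uses explicit $p$-adic sets in which one coordinate strictly dominates the others.
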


The construction is based on $p$-adic (non-Archimedean) absolute values
and on the fact that, for each coordinate, the set of points on the sphere
at which the absolute value of that coordinate is strictly greater than
the absolute values of the other coordinates provides an example of an
everywhere dense set containing no pair of points at unit distance. Indeed,
the scalar product of any two vectors from such a set is nonzero.

In the next section, we obtain upper bounds for
$\chi_{ed}(\mathbb{R}^n)$.

\section{Main result}

\begin{theorem}
Suppose that $\mathbb{R}^n$ admits a proper coloring with $k$ colors.
Then
\[
    \chi_{ed}(\mathbb{R}^n)\leq nk+1.
\]
\label{thm:main}
\end{theorem}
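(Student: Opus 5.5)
The plan is to build the coloring from a reserved unit-distance-free class plus $n$ "layers", each carrying a moved copy of the given proper $k$-coloring $c$. This accounts for the count $nk+1$. The first layer is colour $0$. The remaining colours are the pairs $(i,j)$ with $i\in\{1,\dots,n\}$ and $j\in\{1,\dots,k\}$.

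\emph{Step 1: the extra colour.} I will first construct a countable dense set $D\subset\mathbb{R}^n$ containing no two points at unit distance. Enumerate the rational balls $B_1,B_2,\dots$ and choose $d_m\in B_m$ off the unit spheres centred at $d_1,\dots,d_{m-1}$; this is possible because finitely many spheres are nowhere dense. All points of $D$ get colour $0$, which is then dense and proper. The point of $D$ is that it serves as a "dump": in later steps it absorbs points whose natural colour would cause trouble. I may need to enlarge $D$ during the construction, but it must stay countable and unit-distance-free.

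\emph{Step 2: the layers.} Choose isometries $g_1,\dots,g_n$ of $\mathbb{R}^n$ and set $c_i=c\circ g_i$; each $c_i$ is again a proper $k$-colouring. Partition $\mathbb{R}^n\setminus D$ into sets $A_1,\dots,A_n$, and give $x\in A_i$ the colour $(i,c_i(x))$. Properness is then automatic. Two points of colour $(i,j)$ lie in $A_i$ and satisfy $c_i(x)=c_i(y)$, so they are not at unit distance. The freedom left is only in choosing the partition $\{A_i\}$ and the isometries $g_i$, and properness never constrains these choices.

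\emph{Step 3: density, the main obstacle.} It remains to arrange that every class $A_i\cap c_i^{-1}(j)$ is dense; this is where the real work lies. The given $c$ may have classes that are nowhere dense or absent in large regions. The plan is a countable bookkeeping argument: enumerate all triples $(B_m,i,j)$ and, for each, force a point of colour $(i,j)$ into $B_m$. Two mechanisms seem available.

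The first is to assign the partition pointwise. For each colour $j$ that $c$ actually uses (without loss of generality all $k$ are used, otherwise fewer colours suffice), I want a point $p\in B_m$ whose image satisfies $c(g_i(p))=j$, and then I put $p$ into $A_i$. Since a single point can serve only one layer, I want the $n$ layers to see genuinely different parts of $c$ inside $B_m$. This is where $n$ enters: for example, take $g_i$ to be dilation-free isometries differing by large translations or coordinate permutations. The hope is that inside any ball the sets $g_i^{-1}(c^{-1}(j))$ can be hit for all $j$.

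The second mechanism, which I would try first if the first fails, is to use $D$ more aggressively. Fix a colour $(i,j)$ and a ball $B_m$. Pick any point $p\in B_m$, declare $p\in A_i$, and recolour it to $(i,j)$ by brute force. The new conflicts are the points of colour $(i,j)$ on the unit sphere around $p$, and I would move those into $D$. The delicate point is keeping $D$ unit-distance-free after this move: a whole sphere is uncountable and contains unit-distance pairs. So I expect the real construction to replace the brute-force recolouring by the choice of $g_i$ in the first mechanism. Concretely, I would choose the translations $g_i$ so that, inside each $B_m$, every colour of $c$ appears after the shift. Making that choice simultaneously for all countably many balls, for instance by a diagonal or Baire-category argument on the space of translations, is the step I expect to be hardest.
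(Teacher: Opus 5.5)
There is a genuine gap. It lies in the design of Step~2, not in the bookkeeping of Step~3. In your scheme every point outside $D$ gets a colour $(i,c(g_i(x)))$, so the class of colour $(i,j)$ is contained in $g_i^{-1}\bigl(c^{-1}(j)\bigr)$. An isometry is a homeomorphism, so this set is dense only if $c^{-1}(j)$ is already dense. For the colourings one actually has, this fails. In the hexagonal $7$-colouring of the plane, or any colouring in which some class has nonempty interior, some class $c^{-1}(j)$ misses an open ball $U$. Then colour $(i,j)$ misses the ball $g_i^{-1}(U)$ for every choice of $g_i$ and every partition $\{A_i\}$.

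So the first mechanism cannot succeed. A single translation $g_i$ must work for all balls at once, and ``every colour of $c$ appears in every $B_m$ after the shift'' just says the classes of $c$ are dense, which is what you are trying to produce. No diagonal or Baire-category choice of translations changes this. Enlarging $D$ does not help, since $D$ is one colour class. Your second mechanism does break the rigid form $(i,c_i(x))$. But, as you observe, dumping the conflicting points of whole spheres into one unit-distance-free class cannot work.

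The missing idea is to send the conflicts created by dense ``seeds'' to another layer rather than to the dump. The seeds must also be chosen generically, so that only a negligible set is blocked in every layer. The paper takes a countable dense set $X$ whose coordinates are algebraically independent over $\mathbb{Q}$, so no two points of $X$ are at unit distance. It splits $X$ into dense parts $X_1,\dots,X_n$ and uses the untransformed classes $A_1,\dots,A_k$ of $c$ in every layer:
\[
A'_{i,j}=X_j\cup\Bigl(A_i\setminus\bigcup_{x\in X_j}S^{n-1}_1(x)\Bigr).
\]
Each $A'_{i,j}$ is dense because it contains $X_j$. It is unit-distance-free because the unit spheres around the seeds of layer $j$ are removed from layer $j$ only.

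A point $y$ with $c(y)=i$ is lost from all $n$ layers only if, for every $j$, it lies on a unit sphere centred at some $x_j\in X_j$. That is, it is a common point of $n$ unit spheres with centres taken one from each $X_j$. These points form the set $A^*$, which is countable because $X$ is. The paper argues it is dense and, by algebraic independence of the centres, unit-distance-free; it is the extra colour. So your dump $D$ is not chosen in advance: it is forced to be exactly this set.

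The factor $n$ comes from a dimension count: $n$ generic unit spheres in $\mathbb{R}^n$ meet in at most two points. It does not come from $n$ isometric copies of $c$. The resulting cover by $nk+1$ dense unit-distance-free sets becomes a partition in two steps. First assign disjoint countable dense subsets $B'_i\subset B_i$. Then distribute the remaining points among the sets that contain them.
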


\begin{proof}
We shall say that the coordinates of the points of a set
$X\subset\mathbb{R}^n$ are algebraically independent if the union of the
sets of first coordinates, second coordinates, and so on up to the
$n$-th coordinates, in other words, the union of the projections of $X$ onto
the coordinate axes is algebraically independent over $\mathbb{Q}$.

Let $X$ be an everywhere dense subset of $\mathbb{R}^n$ whose coordinates
are algebraically independent over $\mathbb{Q}$. Observe that such a set exists. Indeed, if $X$ is finite
or countable, then the set of such points is contained in a countable union
of proper algebraic subsets of \(\mathbb R^n\). Hence it has empty
interior. In particular, every nonempty open ball contains a point that
can be added.  In particular, for every $\varepsilon>0$,
the construction can be carried out so that $X=X(\varepsilon)$ is an
$\varepsilon$-net in $\mathbb{R}^n$.

Let $A_1,\dots,A_k$ be the color classes in the original proper coloring
of $\mathbb{R}^n$. We shall construct a proper coloring of $\mathbb{R}^n$
with $nk+1$ dense color classes.

Partition $X$ into $n$ subsets
$X_1,\dots,X_n$, each dense in $\mathbb{R}^n$. Define
\[
    A'_{i,j}
    =
    X_j\cup
    \left(
        A_i\setminus
        \bigcup_{x\in X_j}S^{n-1}_1(x)
    \right),
    \qquad
    1\leq i\leq k,\quad 1\leq j\leq n,
\]
where $S^{n-1}_1(x)$ denotes the unit sphere centered at $x$.

Let
\[
    A^*
    =
    \bigcup_{x_j\in X_j,\;1\leq j\leq n}
    \bigcap_{1\leq j\leq n} S^{n-1}_1(x_j).
\]
Thus, $A^*$ is the set of all intersections of $n$ unit spheres whose
centers are chosen from $X_1,\dots,X_n$, one center from each set.

The set $A^*$ is everywhere dense. Indeed, given any point
$y\in\mathbb{R}^n$, choose points
\[
z_1,\dots,z_n\in S^{n-1}_1(y)
\]
such that the vectors $z_1-y,\dots,z_n-y$ are linearly independent.
If the points $x_j\in X_j$ are chosen sufficiently close to $z_j$, then
the corresponding $n$ spheres have an intersection point arbitrarily
close to $y$. This follows, for example, from the implicit function
theorem. Hence every neighborhood of $y$ intersects~$A^*$.

The sets
\[
    B_{i+n(j-1)}=A'_{i,j},
    \qquad 1\leq i\leq k,\quad 1\leq j\leq n,
\]
together with
\[
    B_{nk+1}=A^*
\]
form a cover of $\mathbb{R}^n$. Indeed, any point not covered by the sets
$A'_{i,j}$ belongs, by construction, to the intersection of $n$ unit
spheres, and hence belongs to~$A^*$.

Each of the sets $A'_{i,j}$ is unit distance free: the set $A_i$ is
unit distance free by assumption, points of $X_j$ cannot be at unit
distance from one another because of the algebraic independence of the
coordinates of $X$, and the spheres removed from $A_i$ eliminate pairs
consisting of a point of $X_j$ and a point of $A_i$. Moreover, the set
$A^*$ is unit distance free, again by the algebraic independence of the
coordinates of the centers.

It remains to pass from the cover by the everywhere dense
unit distance free sets
$B_1,\dots,B_{nk+1}$ to a partition into everywhere dense sets
$C_1,\dots,C_{nk+1}$. For example, choose countable, pairwise disjoint,
everywhere dense subsets
\[
    B'_i\subset B_i,\qquad 1\leq i\leq nk+1,
\]
and put $B'_i$ into $C_i$. Assign each remaining point to one of the
indices $i$ for which it belongs to $B_i$. Thus $C_i\subseteq B_i$ for
every $i$, so the resulting partition is proper, and every $C_i$ is
everywhere dense.
\end{proof}

To improve this bound, one may consider covers of $\mathbb{R}^n$ by
$k'$ unit distance free sets arranged in $n$ layers. If $k'<nk$, this
gives a better bound than the one in Theorem~\ref{thm:main}. In the planar
case, such a construction is known~\cite{parts2022upper}.

\begin{figure}
    \centering
    \includegraphics[width=14cm]{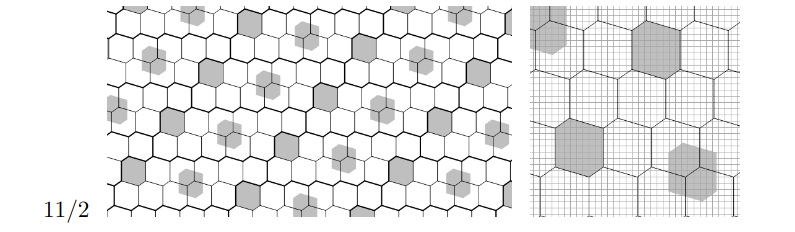}
    \caption{2-fold coloring with 11 colors found by J. Parts~\cite{parts2022upper}}
    \label{fig:2fold}
\end{figure}

\begin{claim}
    \[
        \chi_{ed}(\mathbb{R}^2)\leq 12.
    \]
\end{claim}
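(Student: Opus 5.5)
The plan is to rerun the proof of Theorem~\ref{thm:main} with $n=2$. The only change is in the first step: in place of two copies of a $7$-coloring (which would give $2\cdot 7+1=15$ colors), we use the 2-fold coloring of Parts~\cite{parts2022upper} shown in Figure~\ref{fig:2fold}. This coloring uses $11$ unit distance free sets $D_1,\dots,D_{11}$, and every point of the plane lies in at least two of them. We add one further set playing the role of $A^*$, which gives $11+1=12$ colors.

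First we fix a dense set $X\subset\mathbb{R}^2$ with algebraically independent coordinates, as in the proof of Theorem~\ref{thm:main}, and split it into dense parts $X_1,X_2$. The 2-fold cover should be arranged in two layers: partition the indices $\{1,\dots,11\}$ into two groups $I_1,I_2$ so that every point of the plane lies in some $D_i$ with $i\in I_1$ and also in some $D_i$ with $i\in I_2$. If the figure does not give such a split directly, we weaken the requirement: it is enough that for every point $p$ the two indices $i\neq i'$ with $p\in D_i\cap D_{i'}$ can be assigned to different centre sets. We then set
\[
D'_i = X_{j(i)}\cup\Bigl(D_i\setminus\bigcup_{x\in X_{j(i)}}S^1_1(x)\Bigr),
\]
where $j(i)\in\{1,2\}$ labels the layer, and $D'_{12}=A^*=\bigcup_{x_1\in X_1,\,x_2\in X_2}S^1_1(x_1)\cap S^1_1(x_2)$.

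Each $D'_i$ is unit distance free, for the same reasons as in Theorem~\ref{thm:main}: $D_i$ is unit distance free, points of $X$ are never at unit distance from each other, and the removed circles destroy all mixed pairs. The set $A^*$ is unit distance free and dense, again exactly as before. Each $D'_i$ is dense because it contains $X_{j(i)}$.

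For the covering property, take a point $p\notin A^*$. It lies on circles centred at points of at most one of $X_1,X_2$, say only $X_1$ (or neither). We need some $i$ with $p\in D_i$ and $j(i)=2$, so that $p$ is not removed from $D_i$. This is the main obstacle: we must check that the layer assignment guarantees that every point meets both layers. With a genuine two-layer decomposition of Parts' coloring this is automatic.

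If the coloring does not split that way, we first try to choose $j(i)$ so that the two sets covering any point get different layers. This is a 2-coloring of the graph on $\{1,\dots,11\}$ whose edges join $i,i'$ whenever some point is covered only by $D_i$ and $D_{i'}$. That graph should be bipartite from the figure's structure, since the tiles of the two copies overlap in a layered manner. Once covering is established, we finish with the same countable-disjoint-dense-subsets refinement as at the end of the proof of Theorem~\ref{thm:main}, turning the cover into a partition into $12$ dense, proper color classes.
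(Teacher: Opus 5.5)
There is a genuine gap at the covering step, and your two-layer scheme cannot close it.

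With only two centre sets, you need every point $p$ to lie in some $D_i$ with $j(i)=1$ and in some $D_{i'}$ with $j(i')=2$. Your ``weakened'' requirement is literally the same condition. The bipartite-graph version is not even sufficient, since it ignores points lying in three or more of the $D_i$. The condition says that $\{D_i\}_{j(i)=1}$ and $\{D_i\}_{j(i)=2}$ each cover $\mathbb{R}^2$. The two groups together have $11$ sets, so one of them would be a cover of the plane by at most $5$ unit distance free sets. That is a proper $5$-coloring of $\mathbb{R}^2$, which is not known to exist; the best upper bound is $7$. So no such split can be read off Parts' figure: the point of an $11$-color $2$-fold coloring is precisely that it is not a superposition of two ordinary colorings.

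Without the split, the argument really breaks. Take $p\notin X$ lying only in sets of layer $j$, and on exactly one unit circle centred in $X$, that circle being centred in $X_j$. Then $p$ is removed from every $D_i$ containing it, yet $p\notin A^*$.

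The missing idea, which is the paper's construction, is to stop tying centre sets to layers and give each of the eleven classes its own centres:
\begin{itemize}
\item Take pairwise disjoint dense sets $X_1,\dots,X_{11}\subset X$.
\item Put $D'_i=X_i\cup\bigl(D_i\setminus\bigcup_{x\in X_i}S^1_1(x)\bigr)$.
\item Let $A^*$ be the set of all intersection points of two unit circles with distinct centres in $X$.
\end{itemize}
Suppose $p$ lies in no $D'_i$. The two-fold property gives $i\neq i'$ with $p\in D_i\cap D_{i'}$. Since $p$ was removed from both, it lies on a unit circle centred in $X_i$ and on one centred in $X_{i'}$. Because $X_i$ and $X_{i'}$ are disjoint, these are two distinct circles, so $p\in A^*$.

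Density and unit-distance-freeness of this larger $A^*$ follow from the algebraic independence of the coordinates of $X$, exactly as in Theorem~\ref{thm:main}. The rest of your argument (density of the $D'_i$, passage from cover to partition) then goes through unchanged, with no layering of Parts' coloring needed.
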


\begin{proof}
Let $A_1,\dots,A_{11}$ be unit distance free sets forming a
two-fold cover of the plane; that is, every point of the plane belongs
to at least two of these sets (see Fig.~\ref{fig:2fold}). Choose
everywhere dense sets
\[
    X_1,\dots,X_{11}\subset X
\]
and define
\[
    A'_i
    =
    X_i\cup
    \left(
        A_i\setminus
        \bigcup_{x\in X_i}S^1_1(x)
    \right),
    \qquad 1\leq i\leq 11.
\]
Let $A^*$, as above, be the set of all pairwise intersections of unit
circles with centers in $X$.

We obtain a cover of $\mathbb{R}^2$ by the everywhere dense
unit distance free sets
\[
    A'_1,\dots,A'_{11},A^*.
\]
Indeed, since the original cover is two-fold, every point not covered
by any of the sets $A'_i$ belongs to two unit circles with centers in
$X$, and therefore belongs to $A^*$. As above, this cover can be converted
into a proper coloring with 12 dense colors.
\end{proof}

\section{An arbitrarily large gap}

It is worth noting that, under the present requirement, the chromatic
number of the strip $\mathbb{R}\times[0,\varepsilon]$ is necessarily larger
than its ordinary chromatic number. In the standard setting, the latter is
equal to $3$ when $\varepsilon\leq\frac{\sqrt{3}}{2}$~\cite{bauslaugh1998tearing}.

\begin{claim}
    For every $\varepsilon>0$,
    \[
        \chi_{ed}\bigl(\mathbb{R}\times[0,\varepsilon]\bigr)\geq 4.
    \]
\end{claim}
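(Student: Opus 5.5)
The plan is to argue by contradiction. Suppose $c$ is a proper everywhere dense coloring of $S=\mathbb{R}\times[0,\varepsilon]$ with three colors $1,2,3$. We may assume $\varepsilon<\tfrac{\sqrt3}{2}$, since restricting a dense proper coloring to a thinner substrip (with nonempty interior) keeps it dense and proper. The basic gadget is a pair of close points. If $p,p'\in S$ are at small distance $\delta$ and the vector $p'-p$ is nearly vertical, then the unit circles around $p$ and $p'$ meet at two points. These lie near $p\pm(1,0)$ and still inside $S$ (strictly inside if $p,p'$ are interior). If $c(p)\neq c(p')$, every such common neighbour $q$ is forced to receive the third color. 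Density is what supplies such pairs. Every open set meets all three color classes, so in any small open box $U$ we can choose $p\in U$ of a prescribed color. By shrinking, we can then choose $p'\in U$ of another prescribed color such that $p'-p$ lies in a prescribed open cone of near-vertical directions. The point $q$ then depends continuously on $(p,p')$ by the implicit function theorem, exactly as in the density argument for $A^*$ in the proof of Theorem~\ref{thm:main}.

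The next step is propagation. Fix $p,p'$ of colors $1,2$ near a point $y$, so that $q\approx y+(1,0)$ gets color $3$. Density near $q$ gives $q'$ close to $q$, with $q'-q$ nearly vertical, of color $1$ (or $2$). Then every common neighbour $s$ of $q,q'$ near $y+(2,0)$ or back near $y$ must have color $2$ (respectively $1$). Iterating, I would build a finite chain of such ``doubled'' edges running from near $y$ to near $y+(m,0)$ and back. Each choice is made inside an open set of parameters, so density always lets us pick the next point with the desired color. The goal is to close the chain into a configuration whose forced colors are inconsistent. For example, the chain could return to a neighbourhood of $y$ and produce a point forced to have color $3$ at unit distance from another point forced to have color $3$. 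Alternatively, it could produce two points forced to different colors that coincide. The combinatorial model is an odd closed walk in the ``color difference'' bookkeeping. Each doubled edge acts as a transposition of the pair of colors, and an odd number of steps returning to the start yields a contradiction.

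The main obstacle is the closing step. The points produced by the chain are determined only up to small perturbations. To get an honest unit-distance coincidence at the end, I would need one extra degree of freedom at the final step that exactly matches the constraint. I would try to obtain it as follows: keep the last point $p'_{\mathrm{last}}$ free on a short nearly vertical arc, and show by continuity (intermediate value) that some position on the arc is at unit distance from the first forced point. The difficulty is that density only lets us pick colors on open sets, not on a prescribed arc. So I would instead arrange the final constraint to be an open condition. For instance, I would require the last forced point to lie within a small ball on which, by an earlier step, we already know a forced color via a second doubled edge. The contradiction would then come from two doubled edges forcing different colors on points we are free to take equal. Checking that the geometry of near-vertical pairs in a strip of width $\varepsilon$ always leaves enough room for these choices is the step I expect to require the most care.
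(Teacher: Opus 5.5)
There is a genuine gap at exactly the closing step you flag, and the repair you propose does not work.

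In your chain every point is one of two kinds.
\begin{itemize}
\item It is chosen from a color class. Then density lets you place it only somewhere in a small open ball, never at a prescribed point or on a prescribed curve; a color class may well be countable.
\item It is a forced point, i.e.\ an intersection of two unit circles. Then it is completely determined by points chosen earlier.
\end{itemize}
So once the choices are made the configuration is rigid. The closing condition is an exact unit distance (one equation) or an exact coincidence (two equations) between forced points. That is an overdetermined requirement on the chosen points, and density cannot guarantee it: the admissible tuples of chosen points form a set that is merely dense, and it can miss a hypersurface entirely.

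In particular, a doubled edge fixes the color of only two isolated points, never of a ball. So there is no ``small ball on which we already know a forced color''. Likewise, forced points coming from two different chains range over two dense sets, which need not meet, so they are not ``free to be taken equal''.

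The missing idea, which is the paper's argument, is to put only \emph{one} colored constraint on each cycle vertex. The cycle vertices, whose colors are never prescribed, then become the unknowns.

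Realize an odd cycle $v_1,\dots,v_{2m+1}$ in the interior of the strip and attach one pendant $w_i$ to each $v_i$. The conditions $|v_i-v_{i+1}|=1$ and $|v_i-w_i|=1$ are $2(2m+1)$ equations in the $2(2m+1)$ coordinates of the $v_i$. Suppose their Jacobian with respect to the $v_i$ is nonsingular at the reference configuration. Then the implicit function theorem lets you first choose every $w_i$ in color class $1$ near its reference position (this is where density enters), and then solve exactly for the cycle. Each $v_i$ is adjacent to a point of color $1$, so the odd cycle would have to be properly colored with the two remaining colors, which is impossible.

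Your chain is already close to this. If you always give the partners color $1$, the forcing in your chain says nothing more than that each chain vertex is adjacent to a point of color $1$. In your language, the fix is as follows.
\begin{itemize}
\item Drop the partner $p'$ at the start, so that the first vertex is constrained by only one colored point and keeps one degree of freedom.
\item Choose all colored points beforehand, near a reference configuration.
\item Use that remaining freedom, together with a nondegeneracy condition, to close the chain exactly.
\end{itemize}
The free parameter has to sit on a vertex whose color you never prescribe, not on $p'_{\mathrm{last}}$.
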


\medskip
\emph{Sketch of the proof.} For sufficiently large $m=m(\varepsilon)$, the odd cycle
$C_{2m+1}$ admits a unit distance realization in the strip
$\mathbb{R}\times[0,\varepsilon]$ (see~\cite{bauslaugh1998tearing}). We may also
assume that none of its vertices lies on the boundary of the strip; this
can always be achieved by increasing $m$ if necessary.

Attach a pendant vertex to each vertex of the cycle in such a way that the
corresponding Jacobian, describing the dependence of the coordinates of the
cycle vertices on the coordinates of the pendant vertices, has full rank.
A similar construction was used in~\cite{DCGspheres}. Consequently, the
pendant vertices can be perturbed by arbitrarily small amounts so that they
all lie in a single color class, while the unit distance constraints are
preserved.

The vertices of the odd cycle are therefore forbidden from using that
color. Since an odd cycle requires at least three colors in a proper
coloring, at least four colors are required altogether.

\medskip

By the same idea, one can construct examples for which the gap between the
ordinary chromatic number and the chromatic number with dense
color classes is arbitrarily large.

\begin{claim}
    For every $\varepsilon>0$ and every positive integer $k$,
    \[
        \chi_{ed}\bigl(\mathbb{R}\times[0,\varepsilon]^k\bigr)\geq k+3.
    \]
\end{claim}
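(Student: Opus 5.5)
We argue by induction on $k$. The case $k=0$ is the previous Claim: an odd cycle realized in the strip, with pendant vertices forced into one color class, needs $3+1=4$ colors. Throughout, the inductive object is a finite unit distance graph with some extra structure, not just a colour count. For $k\ge 0$ we aim to prove the statement
\begin{quote}
$(P_k)$: in every proper everywhere dense coloring of $M_k=\mathbb{R}\times[0,\varepsilon]^k$, every nonempty open set $U\subseteq M_k$ contains a finite configuration, realized by a rigid-enough unit distance graph, whose vertices use at least $k+3$ colors.
\end{quote}
We also want the configuration to lie in the interior of $M_k$ and to be flexible: its vertices can be moved by a small amount while all unit distances are kept, through a family of realizations of dimension larger than the number of vertices to be pinned. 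Localizing to $U$ is harmless, because the strip realization of $C_{2m+1}$ can be translated along the first coordinate and shrunk in the thin directions.

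For the inductive step, fix a proper everywhere dense coloring of $M_{k+1}=M_k\times[0,\varepsilon]$, which is a proper coloring of that set. We use the extra thin coordinate to pin one more color, copying the pendant-vertex trick from the previous proof. Take the configuration $G$ given by $(P_k)$ inside a slice $M_k\times\{t\}$, lying in the interior. To every vertex $v$ of $G$ attach a pendant vertex $p_v$ at unit distance, placed so that $p_v-v$ has a nonzero component in the new direction $e_{k+2}$. The Jacobian of the map from the positions of the pendant vertices to the positions of the cycle vertices should then have full rank, as in~\cite{DCGspheres}.

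By density of one fixed color class $C$, the pendant vertices can be perturbed into $C$ while the unit distance constraints are kept, with $G$ moving only slightly. This produces a copy $G'$ of $G$ whose vertices avoid $C$. To finish, we need $G'$ to still need $k+3$ colors among the remaining colors, so that $k+4$ colors appear in total. This is why the induction must carry the following statement rather than a bare lower bound on the chromatic number:
\begin{quote}
for any set $F$ of $j\le k$ colors, one can find inside $U$ a copy of the odd cycle whose vertices avoid all of $F$.
\end{quote}
That copy then needs three further colors, giving $j+3$. The case $j=0$ is trivial, and the previous Claim is the case $j=1$ inside a strip.

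Concretely, the strengthened statement will be proved directly in $M_k$ by using all $k$ thin directions at once. Take the odd cycle in the interior. Attach $k$ layers of pendant vertices: layer $\ell$ hangs off the cycle vertices in direction $e_{\ell+1}$, and each layer is assigned to the $\ell$-th color of $F$. Each layer's positions are perturbed into the corresponding dense color class, which is possible because each class is dense and the realization has enough degrees of freedom.

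The main obstacle is to verify the joint full-rank condition. Perturbing the pendant vertices of layer $\ell$ must be absorbable by moving the cycle vertices, while the attachments of the other layers are kept. We plan to do this by choosing the structure so that each cycle vertex $v$ has a private gadget per layer, namely a short unit-distance path $v-q_{v,\ell}-p_{v,\ell}$. The middle vertex $q_{v,\ell}$ has a circle's worth of freedom, which absorbs the perturbation of $p_{v,\ell}$ with the cycle held fixed. The Jacobian then becomes block diagonal, and each block is nonsingular for generic directions. Pinning $p_{v,\ell}$ into the $\ell$-th color forbids that color only at $p_{v,\ell}$, not at $v$, so we need to arrange that the pinned vertex is adjacent to $v$. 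We will therefore reverse the roles: $p_{v,\ell}$ is adjacent to $v$ and placed in direction $e_{\ell+1}$. The cycle is then allowed to move by small generic amounts in the $k$ extra coordinates, which gives $(2m+1)(k+1)$ degrees of freedom against $(2m+1)k$ pinned vertices; each pinned vertex carries an open condition with slack. Checking that this square subsystem is nonsingular at a well-chosen configuration is the crux. All vertices must stay in the interior of the slab, and this is achieved by starting from the cycle at height $\varepsilon/2$ in every thin direction with small pendant offsets.
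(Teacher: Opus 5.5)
Once you discard the induction and the $v$--$q$--$p$ gadget, your argument is the paper's: put an odd cycle in the interior of the box, push the $k$ pendants at each cycle vertex into $k$ fixed color classes, and observe that the two remaining colors cannot color an odd cycle. The paper's sketch simply asserts that the pendants can be perturbed independently. You rightly call this the crux, but you do not prove it, and three of your claims about it are wrong.

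(i) For $\varepsilon<1$, no unit offset inside the box can point along $e_{\ell+1}$. Any unit vector joining two points of $\mathbb{R}\times[0,\varepsilon]^k$ has thin part of norm at most $\sqrt{k}\,\varepsilon$. So for small $\varepsilon$ all pendant directions are close to $\pm e_1$, and only their small thin components can be chosen.

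(ii) There is no slack. Membership in a dense class is not an open condition, since a class may be countable. So the pendant positions must be free parameters sweeping an open subset of $(\mathbb{R}^{k+1})^{(2m+1)k}$, with the cycle recovered by the implicit function theorem. The unknowns are the $(2m+1)(k+1)$ cycle coordinates. The equations are the $(2m+1)k$ pendant constraints plus the $2m+1$ cycle edges. The system is therefore exactly square, and everything rests on its Jacobian $J$ in the cycle variables being nonsingular.

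(iii) Genericity cannot be assumed. Suppose the offsets of layer $\ell$ are the same vector at every cycle vertex, as ``layer $\ell$ hangs off in direction $e_{\ell+1}$'' suggests. Then all offsets span one hyperplane $t^{\perp}$. Translating the whole cycle along $t$ is a first-order motion that fixes every pendant, so $J$ is singular.

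The missing step is short.
\begin{itemize}
\item Let $t_i$ be a unit normal to the $k$ linearly independent pendant offsets at $v_i$, and let $d_i=v_i-v_{i+1}$, with indices mod $2m+1$.
\item A kernel vector of $J$ has $\dot v_i=s_i t_i$, and the edge equations become $s_i\,(d_i\cdot t_i)=s_{i+1}\,(d_i\cdot t_{i+1})$.
\item Hence, when all these dot products are nonzero, $J$ is nonsingular if and only if $\prod_i d_i\cdot t_i\neq\prod_i d_i\cdot t_{i+1}$.
\end{itemize}
To meet this condition, take a planar zigzag cycle in the $(e_1,e_2)$-plane, centered at thin height $\varepsilon/2$, with no horizontal edge.
\begin{itemize}
\item At $v_i$ for $i\neq 0$, choose the offsets near $\pm e_1$ inside $e_2^{\perp}$, so that $t_i=e_2$.
\item At $v_0$, choose them near $\pm e_1$ inside $t_0^{\perp}$, where $t_0=(e_2+\tau e_1)/\sqrt{1+\tau^2}$ and $\tau\neq 0$ is small.
\end{itemize}
All factors not involving $t_0$ cancel. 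The ratio of the two products is then $\bigl(1+\tau(d_0)_1/(d_0)_2\bigr)/\bigl(1+\tau(d_{2m})_1/(d_{2m})_2\bigr)$. This differs from $1$ as long as $d_0$ and $d_{2m}$ are not parallel, i.e. the cycle genuinely bends at $v_0$.

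With $J$ nonsingular, choose each $p_{v,\ell}$ in the $\ell$-th class near its original interior position and solve for the cycle. This gives the contradiction exactly as you intend.
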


\medskip
\emph{Sketch of the proof.}  Indeed, attach $k$ pendant vertices to each vertex of an odd cycle and
consider a general-position unit distance realization of the resulting
graph in the ``box'' $\mathbb{R}\times[0,\varepsilon]^k$.
The realization can be chosen so that the pendant vertices can be perturbed
independently by arbitrarily small amounts while preserving all
unit distance constraints. If there were a coloring with at most $k+2$
dense color classes, the $k$ pendant vertices attached to each
cycle vertex could be perturbed into $k$ pairwise distinct color classes.
The cycle vertices would then be forbidden from using these $k$ colors, but
the remaining two colors cannot properly color an odd cycle. Hence at least
$k+3$ colors are necessary.

\medskip

On the other hand, if $\varepsilon$ is sufficiently small, we still have

\[
 \chi\bigl(\mathbb{R}\times[0,\varepsilon]^k\bigr) = 3.
\]

\section{Other norms}

In recent years, chromatic number problems for geometric graphs have been
studied extensively for distances induced by the $p$-norm and, more
generally, by arbitrary norms. We describe several extensions of the
problem considered here to distances in $\mathbb{R}^n$ induced by a norm.

For a real number $p\geq 1$, let
\[
    \|x\|_p=\left(\sum_{i=1}^n |x_i|^p\right)^{1/p}
\]
denote the $p$-norm.

In the limit as $p\to\infty$, we obtain
\[
    \|x\|_\infty=\max_{1\leq i\leq n}|x_i|.
\]

With dense color classes, this case remains straightforward.

\begin{claim}
    \[
    \chi_{ed}\bigl(\mathbb{R}^n;\|\cdot\|_\infty\bigr)
    =
    \chi\bigl(\mathbb{R}^n;\|\cdot\|_\infty\bigr)
    =
    2^n.
    \]
\end{claim}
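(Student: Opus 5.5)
The plan is to prove the chain $2^n\le\chi(\mathbb{R}^n;\|\cdot\|_\infty)\le\chi_{ed}(\mathbb{R}^n;\|\cdot\|_\infty)\le 2^n$. The middle inequality is the general fact $\chi\le\chi_{ed}$ noted in the introduction, so two things remain: a lower bound for the ordinary chromatic number and a dense construction with $2^n$ colors.

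For the lower bound, the vertices of the cube $\{0,1\}^n$ are pairwise at $\|\cdot\|_\infty$-distance exactly $1$. They therefore form a unit-distance clique of size $2^n$, which forces $\chi\ge 2^n$.

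For the upper bound, the standard coloring assigns to $x$ the vector of parities $(\lfloor x_i\rfloor \bmod 2)_{i=1}^n$. It is proper: if $\|x-y\|_\infty=1$, then some coordinate has $|x_i-y_i|=1$, hence $\lfloor x_i\rfloor-\lfloor y_i\rfloor=\pm1$ and the parities differ. Its classes are unions of half-open cubes, so they are not dense. To make them dense I would replace $\lfloor\cdot\rfloor$ by a wild function $f\colon\mathbb{R}\to\mathbb{Z}$ satisfying $f(t+1)=f(t)+1$ for all $t$, with every parity class of $f$ dense in $\mathbb{R}$. Concretely, choose a Vitali-type transversal $T$ of $\mathbb{R}/\mathbb{Z}$ that is dense; for instance, take $T\subset[0,1)$ and then shift each point of $T$ by an integer chosen so that the shifted transversal is dense. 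Writing every $t$ uniquely as $t=\tau+m$ with $\tau\in T$ and $m\in\mathbb{Z}$, set $f(t)=m$. The identity $f(t+1)=f(t)+1$ is then immediate.

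Density of the parity classes of $f$ needs care. It suffices that both $\{t:f(t)\text{ even}\}$ and $\{t:f(t)\text{ odd}\}$ meet every interval. To arrange this, enumerate the rational intervals and let the transversal $T$ (its representatives in $[0,1)$ together with their integer shifts) meet each interval in at least one point of each parity of the shift, choosing representatives of distinct cosets; this is a routine countable diagonal argument, since each coset $\tau+\mathbb{Z}$ is countable and the interval contains points from continuum many cosets. Having the needed points in the transversal means $T$ is dense, and the shifted copies $T+m$ then cover both parities inside each interval.

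Now color $x$ by $c(x)=(f(x_i)\bmod 2)_{i=1}^n$. If $\|x-y\|_\infty=1$, some coordinate has $y_i=x_i\pm1$, so $f(y_i)=f(x_i)\pm1$ and the colors differ. Each color class is a product $\prod_i E_{\epsilon_i}$ of sets that are dense in $\mathbb{R}$, and a product of dense sets is dense in $\mathbb{R}^n$.

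The main obstacle is the one-dimensional construction of $f$: it needs a transversal of $\mathbb{R}/\mathbb{Z}$, so the axiom of choice, together with simultaneous density of both parity classes. Handling this by transfinite or countable recursion over rational intervals is the step to check carefully. Once $f$ exists, the rest is the product argument above.
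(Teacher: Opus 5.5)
Your proof is correct and follows the paper's argument. The $2^n$ vertices of $\{0,1\}^n$ form a unit-distance clique for the lower bound, and products of the two dense classes of a proper 2-coloring of $\mathbb{R}$ give the upper bound. The only addition is that you construct that dense 2-coloring of $\mathbb{R}$, which the paper takes for granted. No choice is needed here: a transversal of $\mathbb{R}/\mathbb{Z}$ such as $[0,1)$ is explicit, unlike a Vitali set, and $t\mapsto(\lfloor t\rfloor+\mathbf{1}_{\mathbb{Q}}(t)) \bmod 2$ already works.
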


\begin{proof}
Consider a proper coloring of the real line $\mathbb{R}$ with two
dense color classes $C_1$ and $C_2$. For every choice of
\[
(\tau(1),\tau(2),\dots,\tau(n)),\qquad \tau(i)\in\{1,2\},
\]
the set
\[
    C_{\tau(1)}\times C_{\tau(2)}\times\cdots\times C_{\tau(n)}
    \subset\mathbb{R}^n
\]
is dense in $\mathbb{R}^n$ and contains no pair of points at unit
$\ell_\infty$-distance. Indeed, if two points in this set were at
$\ell_\infty$-distance one, then their distance in at least one coordinate
would be one, contradicting the properness of the coloring of~$\mathbb{R}$.

There are $2^n$ such sets; they are pairwise disjoint and form a partition
of~$\mathbb{R}^n$. Hence
\[
\chi_{ed}\bigl(\mathbb{R}^n;\|\cdot\|_\infty\bigr)\leq 2^n.
\]
On the other hand, the $2^n$ vertices of the unit cube
$\{0,1\}^n$ form a clique in the unit distance graph for the
$\ell_\infty$-norm. Therefore
\[
\chi\bigl(\mathbb{R}^n;\|\cdot\|_\infty\bigr)\geq 2^n,
\]
and the reverse inequality follows from the product coloring above.
Since $\chi_{ed}\geq\chi$, both equalities follow.
\end{proof}

\begin{claim}
If $p\in\mathbb{Q}$, then
\[
    \chi_{ed}\bigl(\mathbb{R}^n;\|\cdot\|_p\bigr)
    \leq n\chi\bigl(\mathbb{R}^n;\|\cdot\|_p\bigr)+1.
\]
\end{claim}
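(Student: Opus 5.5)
We repeat the proof of Theorem~\ref{thm:main} verbatim, replacing Euclidean unit spheres by unit $\ell_p$-spheres $S_p(x)=\{y:\|y-x\|_p=1\}$; only two points depend on the norm. Let $k=\chi(\mathbb{R}^n;\|\cdot\|_p)$ with color classes $A_1,\dots,A_k$. As before, take an everywhere dense set $X$ whose coordinates are algebraically independent over $\mathbb{Q}$, split it into $n$ dense parts $X_1,\dots,X_n$, and set $A'_{i,j}=X_j\cup\bigl(A_i\setminus\bigcup_{x\in X_j}S_p(x)\bigr)$. Let $A^*$ be the union of all intersections $\bigcap_j S_p(x_j)$ with $x_j\in X_j$. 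The covering argument is purely combinatorial and goes through unchanged. So does the final passage from a cover by dense unit distance free sets to a partition.

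\textbf{Step 1: unit distance freeness.} Write $p=a/b$ in lowest terms. The first point to check is that no two points of $X$ are at $\ell_p$-distance $1$. Otherwise $\sum_i |x_i-y_i|^{a/b}=1$ for distinct points of $X$. Fix the signs $\epsilon_i$ of $x_i-y_i$. Using rationality of $p$, clear the fractional exponents by the standard procedure: take norms over the field extension generated by the $b$-th roots, or equivalently multiply over all conjugates. This gives a nonzero polynomial relation with rational coefficients among the coordinates of $x$ and $y$, contradicting algebraic independence. The second point to check, that $A^*$ is unit distance free, is similar. A point $z\in A^*$ is algebraic over $\mathbb{Q}(\text{coords of }x_1,\dots,x_n)$, since it solves the rationalized system. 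Suppose $z,z'\in A^*$ are at distance $1$, arising from centers $(x_j)$ and $(x'_j)$. Then adjoining this relation produces an extra algebraic dependency. The delicate case is when the center tuples overlap. There I would argue by transcendence degree: the set of $z$ obtained from a fixed center tuple is finite over the field of its centers, and the relation $\|z-z'\|_p=1$ forces a nontrivial polynomial relation among the independent coordinates.

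\textbf{Step 2: density of $A^*$.} This needs the $\ell_p$-analogue of the implicit function argument. For $1<p<\infty$ the function $y\mapsto\|y-x\|_p^p$ is $C^1$ away from the coordinate hyperplanes through $x$, with gradient $p\,(\operatorname{sign}(y_i-x_i)|y_i-x_i|^{p-1})_i$. Given $y$, choose $z_1,\dots,z_n\in S_p(y)$ with all coordinates of $z_j-y$ nonzero and with these gradients linearly independent; a generic choice works. Then the implicit function theorem yields an intersection point near $y$ once the $x_j\in X_j$ are chosen close to $z_j$. For $p=1$ smoothness fails, but $S_1$ is a union of hyperplane pieces. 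Choosing the $z_j$ in the interiors of facets with independent normals $(\pm1,\dots,\pm1)$ reduces the intersection to a linear system, so density follows by the same argument.

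\textbf{Main obstacle.} I expect the main obstacle to be Step 1: carefully justifying that the rationalized $\ell_p$ equations give genuinely nontrivial polynomial relations. One must show the product over conjugates is not identically zero, and handle points of $A^*$, which are only algebraic over the centers rather than independent. The approach I would try first is to treat all coordinates of points of $X$ as independent indeterminates and verify nontriviality by specializing them.
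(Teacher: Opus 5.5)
Your overall route is the paper's. You rerun the proof of Theorem~\ref{thm:main} with $\ell_p$-spheres and use the rationality of $p$ only to make the unit-distance condition algebraic, so that algebraic independence can exclude it. The paper's proof consists of exactly this remark, so your explicit density argument for $A^*$ (implicit function theorem for $p>1$, facets with independent normals for $p=1$) is a correct addition. For $X$ you can drop the norm computation, and with it your worry about the product over conjugates vanishing. If $x\neq y$ lie in $X$, the numbers $e_i=|x_i-y_i|^{1/b}$ generate an algebraic extension of $\mathbb{Q}(|x_1-y_1|,\dots,|x_n-y_n|)$, which has transcendence degree $n$, so the $e_i$ are algebraically independent. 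Hence $\sum_i e_i^{a}=1$ is already a nonzero polynomial relation.

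The step that is not actually established is that $A^*$ is unit distance free. Your sketch leaves it open, and so does the paper, which asserts it from algebraic independence both here and in Theorem~\ref{thm:main}. The transcendence-degree argument you propose cannot settle it. Points $z\in\bigcap_j S_p(x_j)$ and $z'\in\bigcap_j S_p(x'_j)$ are already algebraic over the field of the center coordinates, so imposing $\|z-z'\|_p=1$ changes no transcendence degree. It yields a nonzero polynomial in the centers only if the relation is not satisfied identically on the relevant branch, and that is exactly what has to be proved.

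One clean way to organize this is as follows. Fix $p$ rational and fix an overlap pattern of the two center tuples, including the case of a single tuple. The set of center configurations admitting such $z,z'$ is semialgebraic and defined over $\mathbb{Q}$. By quantifier elimination it is a finite union of sets cut out by integer polynomial equations and strict inequalities. A configuration with algebraically independent coordinates satisfies no nontrivial equation, so it can lie in this set only in its interior. You therefore must show that the unit distance cannot persist under all small perturbations of the centers.

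With no shared centers this follows by translating one tuple. With shared centers it needs a genuine geometric argument. For $p=1$, for instance, $z$ and $z'$ are affine in the centers with rational coefficients on each combinatorial cell, so $\|z-z'\|_1=\sigma\cdot(z-z')$ is affine. If it is nonconstant, setting it equal to $1$ gives a forbidden rational relation. If it is constant, write $\sigma=\sum_j\lambda_j u_j$ in the facet normals $u_j$ at $z$. Then unshared centers contribute nothing, and each shared center $x_j=x'_j$ contributes $0$ or $2\lambda_j$ according as the facets of $S_1(x_j)$ through $z$ and $z'$ coincide or are opposite. So either the value is $0$, or $z,z'$ lie on opposite facets of one unit $\ell_1$-sphere and are at distance at least $2$. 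A comparable check is needed for $1<p<\infty$.
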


\begin{proof}
It suffices to observe that, when $p$ is rational, the algebraic
independence of the coordinates prevents two points of the auxiliary set
from being at unit distance, exactly as in Theorem~\ref{thm:main}. The
remainder of the argument is entirely analogous.
\end{proof}

In~\cite{alon2025unit}, it was shown that, for a ``typical'' norm (in the
sense of Baire category in the space of norms) the chromatic number of
$\mathbb{R}^n$ is equal to~$2^n$. Since the coloring constructed in
\cite{alon2025unit} is obtained nonconstructively, it is natural to
conjecture that the additional requirement considered here does not change
the chromatic number in this case.

\begin{conjecture}
For a ``typical'' norm (see~\cite{alon2025unit}), we have
\[
    \chi_{ed}\bigl(\mathbb{R}^n;\|\cdot\|\bigr)
    =
    \chi\bigl(\mathbb{R}^n;\|\cdot\|\bigr)
    =
    2^n.
\]
\end{conjecture}

\section{Open questions}

 Suppose that the same collection of
colors is dense in the neighborhoods of two points at unit distance, while
no other colors are used in those neighborhoods. How many colors are
required?

\begin{question}
Let \(U_\varepsilon(u)\) denote the open ball of radius \(\varepsilon\)
centered at \(u\). Consider the set
\[
P_\varepsilon=U_\varepsilon(v_0)\cup U_\varepsilon(v_1)
\]
in the Euclidean plane, where
\[
v_0=(0,0),\qquad v_1=(1,0),\qquad 0<\varepsilon<\frac12.
\]
Determine \(\lim_{\varepsilon \to 0+} \chi_{ed}(P_\varepsilon)\).
\end{question}

Three colors are necessary to color \(P_\varepsilon\) with  dense
color classes, while four colors suffice for $0 < \varepsilon < \frac{1}{2}$. However, the question of whether a
3-coloring exists is more subtle than it may initially appear.

In the proof
of Theorem~\ref{thm:main}, we constructed unit distance free sets using the
algebraic independence of coordinates. Instead, one may ask how many
dense colors are needed for a proper coloring of a countable
union of circles whose centers have algebraically independent coordinates.

\begin{question}
Let \(X\subset\mathbb{R}^2\) be a set of points whose coordinates are algebraically independent over $\mathbb{Q}$. For each \(x\in X\), let
\[
\omega(x):=\{y\in\mathbb{R}^2:\|x-y\|=1\},
\]
and set
\[
W(X) =\bigcup_{x\in X}\omega(x).
\]
Is it true that
$
\chi_{\mathrm{ed}}(W(X))\leq 3?
$
\end{question}
 If the answer is affirmative, then the upper bound for $\chi_{ed}(\mathbb{R}^2)$ can be reduced to 10.

The gap between the known upper and lower bounds~\cite{cherkashin2018chromatic} for
\(\chi(\mathbb{R}^n)\) grows rapidly with \(n\). Thus, it is reasonable to expect that upper bounds may be obtained not only
from colorings associated with tilings of space by polytopes.

\begin{question}
In the Euclidean setting, or more generally for any fixed norm, can one improve the known upper bounds for
\(\chi(\mathbb{R}^n)\) by using dense color classes?
Does such an improvement occur for some particular subset of
\(\mathbb{R}^n\)?
\end{question}

\subsubsection*{Acknowledgements}

This work was basically done during LIPS’2025, a summer research program for students at MIPT. LLMs were used to edit the manuscript but were not used to obtain the results.

\bibliographystyle{abbrv}
\bibliography{main}

\end{document}